\numberwithin{equation}{section}
\theoremstyle{plain}
\newtheorem{prop}{Proposition}
\newtheorem{Theo}[prop]{Theorem}
\newtheorem{princ}[prop]{Principle}
\newtheorem{que}[prop]{Question}
\newtheorem{lemm}[prop]{Lemma}
\theoremstyle{definition}
\newtheorem{defi}[prop]{Definition}
\theoremstyle{remark}
\renewcommand{\P}{{\mathbb P}}
\newcommand{\F}{{\mathbb F_{p}}}
\newcommand{\Q}{{\mathbb Q_{p}}}
\newcommand{\Z}{{\mathbb Z}}
\newcommand{\N}{{\mathbb N}}
\newcommand{\C}{{\mathbb C}}
\newcommand{\bQ}{\mathbb Q}
\newcommand{\Spec}{{\rm Spec}}
\begin{document}

\title[Rationally connected varieties over $\bQ_p^{nr}$]{Rationally connected varieties over the maximally unramified extension of $p$-adic fields}

\author{Bradley Duesler}
\address{Department of Mathematics \\
Rice University\\
Houston, TX 77005}
\curraddr{ Voya Financial \\ Malvern, PA 19355}
\email{bradley.duesler@voya.com}
\thanks {B.D. was supported by NSF grants DMS-0134259 and  DMS-0240058 and A.K. was supported by NSF grants  DMS-0502170}

\author{Amanda Knecht}
\address{Department of Mathematics and Statistics\\
Villanova University \\
Villanova, PA 19085}
\email{Amanda.Knecht@villanova.edu}
\thanks{The second author was supported in part by NSF Grant DMS-0502170}

\begin{abstract}
A result of Graber, Harris, and Starr shows that a rationally
connected variety defined over the function field of a curve over
the complex numbers always has a rational point. Similarly, a separably rationally
connected variety over a finite field or the function field of a
curve over any algebraically closed field will have a rational
point. Here we show that rationally connected varieties over the
maximally unramified extension of the $p$-adics usually, in a precise
sense, have rational points. This result is in the spirit of Ax and
Kochen's result saying that the $p$-adics are usually $C_{2}$ fields.
The method of proof utilizes a construction from mathematical
logic called the ultraproduct.

\end{abstract}

\maketitle

\section{Introduction}
Let $X$ be a proper, smooth variety over a field $K$ and $\overline{K}$ an algebraic closure of $K$.  A guiding principle in the study of $K$-rational points on $X$ is given by Koll\'{a}r  (\cite{MR1440180} IV.6.3]):

\begin{princ}
 If $\overline{X} = X \times_{\Spec( K)}  \Spec (\overline{K})$ is rationally connected, then $X$ should have lots of $K$-points, at least if $K$ is nice (e.g. $K$ is a finite field, a function field of a curve, or a sufficiently large number field).
\end{princ}

The term ``nice" has since been replaced by many  with the term  quasi-algebraically closed.  
 A field $K$ is said to be quasi-algebraically closed or $C_1$ if every
homogeneous polynomial over $K$ with degree less than the number of
variables has a nontrivial solution in $K$. Some well known examples
are finite fields, function fields in one variable over an
algebraically closed field,  the field of Laurent series over an algebraically closed field and the maximal unramified
extension of $p$-adic fields, $\bQ_p^{nr}.$

A homogeneous polynomial in $n$ variables over $K$ defines a hypersurface in projective $(n-1)$-space $\P^{n-1}$. The hypersurface
associated to a form with degree less than the number of variables,
when smooth, is a Fano variety, thus is rationally connected (see \cite{Ca}, \cite{KMM}).
  Since smooth rationally connected hypersurfaces defined over quasi-algebraically closed fields always have a $K$-rational point, it is natural to ask:
  \begin{que} (\cite{Witt} 1.11)
Let  $X$ be a proper, smooth separably rationally connected variety over a field $K$ where $K$ is a quasi-algebraically closed field.  Is $X(K)= \emptyset$?
 \end{que}

  \noindent Affirmative answers to this question have been given when:
  \begin{enumerate}
  \item $K$ is the function field of a curve defined over an algebraically closed field of characteristic zero  \cite{MR1937199}.  
  \item  $K$ is the function field of  a curve  defined over an algebraically closed field of positive characteristic \cite{MR1981034}.
  \item $K$ is a finite field \cite{MR1943746}.
    \item $K=k((t))$ is the field of Laurent series over an algebraically closed field \cite{MR2757627}.
  \item If $X$ is a smooth, proper, rational \textit{surface} over a quasi-algebraically closed field $K$, then $X(K)\neq \emptyset$ \cite{Manin} , \cite{CT1}.
  \end{enumerate}
  
Colliot-Th{\'e}l{\`e}ne and Madore have shown that there exist  fields $K$ of cohomological dimension $1$ and del Pezzo surfaces $X_d$ of degrees $d=2,3,4$ such that $X_d(K)= \emptyset$ \cite{CTM}.  In particular, these fields are examples of fields of cohomological dimension $1$ which are not $C_1$.  This  seems to rule out a cohomological proof that separably rationally connected varieties over $C_1$ fields have points.

Let us recall
some basic facts about rational connectivity.  
Suppose that $X$ is a smooth, projective variety defined over an arbitrary field $K$.  We say that $X$ is \textit{separably rationally connected} if there is a variety $Y$ and a morphism $u:Y \times \P^1 \rightarrow X$ such that   $$u^{(2)}: (Y \times \P^1) \times (Y \times \P^1) \rightarrow X \times X$$ is dominant and smooth at the generic point.  
If the field $K$ is algebraically closed, we can simplify the definition and say that $X$ is \textit{separably rationally connected} if there is a rational curve, called a very free curve, $f: \P^{1} \rightarrow X$ such
that $f^{*}T_{X}$ is ample  (\cite{MR1440180} IV.3.7).  
Over an arbitrary field $K$ we say that $X$ is \textit{rationally connected} if there is a family of proper algebraic curves $g: U \rightarrow Y$whose geometric fibers are irreducible rational curves with cycle morphism $u:U \rightarrow X$  such that $u^{(2)}$ is dominant.
If the field $K$ is uncountable and  algebraically closed, then we say that $X$ is \textit{rationally connected} if for very general closed points $x_1,x_2 \in X$ there is a morphism  $f: \P^{1} \rightarrow X$
such that $x_1, x_2\in f(\P^1)$ (\cite{MR1440180} IV.3.6).    Over any field of characteristic zero, the notions of rationally connected and separably rationally connected are equivalent   (\cite{MR1440180} IV.3.3).

The proof that rational connectivity and separable rational connectivity  are equivalent over characteristic zero relies in some way on
generic smoothness, which fails in characteristic $p$. In fact, in
positive characteristic there are smooth, projective varieties for which one can find a rational curve through  any two closed points, but the variety does not contain any very free curves 
\cite{MR0526513}. One can think of this as rationally connected varieties containing lots of rigid rational curves while separably rationally connected varieties have rational curves that freely deform.   Another example of a rationally connected but not separably rationally connected  variety  over a field of positive characteristic is give by Koll\'{a}r (\cite{MR1440180} V.5.19).

Because of the subtleties between rationally and separably rationally connected over characteristic $p$, we had to be careful when stating Question 2.  But in this paper we are considering varieties defined over fields of characteristic zero, so the terms are interchangeable.   The question we consider in this article is
whether or not a smooth, projective, rationally connected variety
over the maximal unramified extension of the $p$-adics,
$\mathbb{Q}_{p}^{nr}$, has a rational point. Lang's theorem asserts
that this is true for Fano hypersurfaces \cite{MR0046388}. Here we prove a partial
result.

\begin{Theo}\label{thm_1}
Fix a numerical polynomial $P$. There is a finite
set of exceptional primes $e(P)$, depending only on $P$, so that if $X$   is a smooth, projective,
rationally connected variety defined over
$\mathbb{Q}_{p}^{nr}$ with Hilbert polynomial $P$, then $X(\mathbb{Q}_{p}^{nr}) \neq \emptyset$ as long as $p \notin e(P)$.
\end{Theo}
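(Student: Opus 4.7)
The plan is to argue by contradiction via an ultraproduct, in the spirit of Ax--Kochen's treatment of the $C_2$ conjecture. Suppose the theorem fails for the polynomial $P$: then there is an infinite set $S$ of primes such that for each $p \in S$ there exists a smooth, projective, rationally connected $X_p/\bQ_p^{nr}$ with Hilbert polynomial $P$ but $X_p(\bQ_p^{nr}) = \emptyset$. Because $P$ is fixed, each $X_p$ embeds (via its ample line bundle determining $P$) as a closed subscheme of a common $\P^N$ with $N=N(P)$, parametrized by a $\bQ_p^{nr}$-point of the finite-type scheme $\mathrm{Hilb}_{P,N}$. Crucially, Koll\'ar's theory produces, from $P$ alone, a bound $d=d(P)$ on the minimal degree of a very free rational curve on any smooth rationally connected $X$ with that Hilbert polynomial; so ``rationally connected'' can likewise be encoded by a single first-order statement with parameters of bounded complexity.

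Next, fix a non-principal ultrafilter $\cU$ on $S$ and form $K = \prod_\cU \bQ_p^{nr}$. By {\L}o\'s's theorem applied to the bounded first-order statements of the previous paragraph, the $X_p$ assemble into a closed subscheme $X^* \subset \P^N_K$ with Hilbert polynomial $P$ which is rationally connected (via a very free curve of degree $\leq d$ obtained as the ultraproduct of those on the $X_p$), and which satisfies $X^*(K) = \emptyset$. On the other hand, $K$ is a henselian valued field of equicharacteristic zero, whose residue field $\bar k = \prod_\cU \Fb$ is algebraically closed of characteristic zero. By the Ax--Kochen--Ershov theorem, $K$ is elementarily equivalent, in the first-order language of valued fields, to $\bar k((t))$.

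Now I would invoke result (4) of the introduction: every smooth projective rationally connected variety over the Laurent series field $\bar k((t))$ over an algebraically closed field $\bar k$ of characteristic zero has a $\bar k((t))$-rational point. The existence of a rational point on a fixed subscheme of $\P^N$ cut out by given equations is a first-order sentence in the theory of valued fields, so by elementary equivalence the corresponding statement for $X^*$ over $K$ must also hold, i.e.\ $X^*(K) \neq \emptyset$. This contradicts the previous paragraph and proves the theorem, with the exceptional set $e(P)$ consisting of those finitely many primes at which the argument (quantified properly via {\L}o\'s) fails.

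The main obstacle, and the technical heart of the paper, will be converting the geometric data into uniformly first-order form so that {\L}o\'s's theorem applies cleanly. One must check that both the property ``$X \subset \P^N$ has Hilbert polynomial $P$ and is rationally connected'' and its negation ``$X(K) = \emptyset$'' become (uniform families of) first-order sentences with parameters; this relies on the finite-type boundedness of $\mathrm{Hilb}_{P,N}$ together with the Koll\'ar degree bound $d(P)$ for very free curves, so that the relevant moduli are of bounded complexity in $P$. A secondary subtlety is the clean identification of the residue field of $K$ and the application of Ax--Kochen--Ershov: it is precisely the fact that $\bQ_p^{nr}$ has algebraically closed residue field $\Fb$ that allows the ultraproduct to be elementarily equivalent to $\bar k((t))$, putting us within reach of result (4).
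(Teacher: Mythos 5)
Your proposal has the right overall architecture — contradiction via ultraproduct, Ax–Kochen–Ershov to get elementary equivalence with a Laurent series field, Colliot-Thélène's $\C((t))$ theorem, {\L}o\v{s} to derive a contradiction — and this is indeed the skeleton of the paper's proof. But the step you flag as "the main obstacle" is exactly where your sketch has a real gap, and the way you propose to close it is not the way the paper does it and would not work as stated.

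Two concrete problems. First, your transfer step is logically invalid as written: you say "the existence of a rational point on a fixed subscheme of $\P^N$ cut out by given equations is a first-order sentence," and conclude $X^*(K)\neq\emptyset$ by elementary equivalence with $\bar k((t))$. But $X^*$ is cut out by equations with coefficients in $K$, so the statement "$X^*$ has a rational point" is a formula with parameters in $K$, not a sentence. Elementary equivalence transfers sentences, not formulas with parameters from one field (you would need an isomorphism, or at least a common parameter set, for that). What one actually needs — and what the paper constructs — is a single parameter-free sentence of the shape "for every point $b$ of the Grassmannian lying in the open locus $Z$ where the universal family is rationally connected and has Hilbert polynomial $P$, the fiber over $b$ has a rational point." This sentence is true over $\C((t))$ by Colliot-Thélène, transfers to $K$, and is then contradicted via {\L}o\v{s} by the $X_p$. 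The universal quantification over parameters is essential.

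Second, your proposed first-order encoding of "rationally connected" via a Kollár degree bound $d(P)$ on very free curves has an unaddressed subtlety: a very free curve on $X_p$ is only guaranteed over $\overline{\bQ_p^{nr}}$, not over $\bQ_p^{nr}$, so "$\exists$ a very free curve of degree $\le d$ defined over the base field" is not the condition you want, and quantifying over bounded extensions introduces yet another bound you would need to produce. The paper sidesteps this entirely: by Kollár (\cite{MR1440180} IV.3.11), the locus $Z\subset Gr$ of rationally connected fibers of the universal family is Zariski open, and since the whole setup (Grassmannian, Hilbert scheme, universal family) is defined over $\bQ$, so is $Z$; membership in $Z(L)$ is then a first-order condition in $L$ with the same defining formula for every characteristic-zero field $L$, and it correctly detects geometric rational connectedness because $Z$ is a $\bQ$-subscheme. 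This openness-and-descent argument is the genuine technical content of the proof and is what your sketch replaces with an assertion that does not quite do the job. You should also note that bounding the Hilbert polynomial forces a common ambient $\P^n$ and a uniform truncation degree $M$ with $\varphi_X(M)=P(M)$ (the paper cites \cite{Bren} 12.47), which is what makes the Grassmannian parameter space finite-dimensional and the quantification over it first-order.
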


A polynomial $P(z) \in \mathbb{Q}(z)$ is called a \textit{numerical polynomial} if $P(n)$  is an integer for all sufficiently large integers $n$.

This theorem is similar to Ax and Kochen's theorem \cite{MR0184930} that
the $p$-adic number fields are almost $C_{2}$.   Emil Artin conjectured that the $p$-adic fields $\Q$ are
$C_{2}$. In general, a $C_{i}$ field K is one for which any form in
$K[x_{1}, \ldots , x_{n}]_{d}$ with $n > d^{i}$ has a nontrivial
zero. In \cite{MR0197450} Terjanian found a counterexample to Artin's
conjecture, see for instance \cite{MR0344216}. However, using the methods
of mathematical logic, Ax and Kochen were able to show that
$\Q$ is almost $C_{2}$ in the following sense.  

\begin{Theo}\label{thm_AK}(Ax and Kochen)
  Fix an integer $d > 0$. Then there exists a finite number of primes $p_{0}, \ldots ,p_{m}$ such that for all forms
  $f \in \Q[x_{1}, \ldots , x_{n}]_{d}$ with $n > d^{2}$ and $p \neq p_{0}, \ldots ,p_{m}$, $f$ represents zero over $\Q$.
\end{Theo}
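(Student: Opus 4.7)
The plan is to argue by contradiction using the ultraproduct method of Ax and Kochen. For fixed integers $d > 0$ and $n > d^{2}$, the assertion ``every form of degree $d$ in $n$ variables has a nontrivial zero'' can be written as a single first-order sentence $\phi_{d,n}$ in the language of rings, since a degree-$d$ form in $n$ variables has only finitely many coefficients $a_{\alpha}$ (indexed by multi-indices $\alpha$ with $|\alpha|=d$). Explicitly, $\phi_{d,n}$ reads
$\forall (a_{\alpha})\; \exists (x_{1},\ldots,x_{n}) \neq (0,\ldots,0):\; \sum_{|\alpha|=d} a_{\alpha} x^{\alpha} = 0$.
The theorem then reduces to showing, for each such $n$, that the set of ``bad'' primes $E(d,n) = \{ p : \Q \not\models \phi_{d,n}\}$ is finite.

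Suppose for contradiction that $E(d,n)$ is infinite. Choose a nonprincipal ultrafilter $\mathcal{U}$ on the set of primes with $E(d,n) \in \mathcal{U}$, and form the two ultraproducts
\[
K_{1} = \prod_{\mathcal{U}} \Q, \qquad K_{2} = \prod_{\mathcal{U}} \F((t)).
\]
Each is a henselian valued field of residue characteristic zero; by \L{}o\'s's theorem both have residue field $\prod_{\mathcal{U}}\F$ (a pseudofinite field of characteristic zero) and value group $\prod_{\mathcal{U}} \Z$. The Ax--Kochen--Ershov transfer theorem then gives $K_{1} \equiv K_{2}$ in the language of valued fields, and hence also in the language of fields.

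By the Lang--Nagata theorem, since $\F$ is $C_{1}$ (Chevalley--Warning), the Laurent series field $\F((t))$ is $C_{2}$; in particular $\F((t)) \models \phi_{d,n}$ for every prime $p$. Applying \L{}o\'s's theorem yields $K_{2} \models \phi_{d,n}$; the elementary equivalence passes this to $K_{1} \models \phi_{d,n}$; and one more application of \L{}o\'s gives $\{p : \Q \models \phi_{d,n}\} \in \mathcal{U}$, contradicting $E(d,n) \in \mathcal{U}$. The main obstacle is the Ax--Kochen--Ershov transfer theorem itself, which is the deep model-theoretic input we would invoke as a black box; its proof uses a back-and-forth argument on the associated coarse valuation structures together with Hensel's lemma to lift residue-field solutions to the valuation ring. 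The remaining steps (first-order expressibility of $\phi_{d,n}$, Chevalley--Warning, Lang--Nagata, and \L{}o\'s's theorem) are comparatively routine.
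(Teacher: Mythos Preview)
Your proposal is correct and is precisely the argument the paper sketches: the paper does not give a formal proof of this theorem but describes it as making ``precise the analogy that $\Q$ is like $\F((t))$'' and then invoking that $\F((t))$ is $C_2$, which is exactly your ultraproduct/\L{}o\'s/Ax--Kochen--Er\v{s}ov transfer argument (and is structurally identical to the paper's proof of its own main theorem). One small point worth making explicit: you fix $n$ and show each $E(d,n)$ is finite, but the theorem asks for a single finite exceptional set depending only on $d$; this is harmless since setting $x_{n+1}=0$ shows $\phi_{d,n}\Rightarrow\phi_{d,n+1}$, hence $E(d,n)\subseteq E(d,d^2+1)$ for all $n>d^2$, and it suffices to treat $n=d^2+1$.
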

Their method of proof uses mathematical logic to make precise the
analogy that $\Q$ is like ${\F}((t))$. Then using the fact
that the field ${\F}((t))$ is $C_{2}$ \cite{MR0191897} is enough for Ax and Kochen to
conclude the above theorem. We similarly make an analogy between the asymptotic properties of $\bQ_p^{nr}$ when $p$ goes to infinity and the properties of  $\C((t))$, the field of Laurent expansions over the complex numbers.  Then we use the fact that every rationally connected variety over $\C((t))$ contains a $\C((t))$-point \cite{MR2757627}.

It should be noted that in a recent paper, Denef proves Theorem \ref{thm_AK} using only Algebraic Geometry \cite{denef}. \\

\noindent {\bf Acknowledgments:}  We would like to thank Brendan Hassett for many 
helpful conversations concerning the topics in this paper. We are also grateful to Jean-Louis Colliot-Th{\'e}l{\`e}ne, Keith Conrad, Olivier Wittenberg, and the referee for their insightful comments.

\section{Model Theory and Algebraic Geometry}
The main tool from Model Theory we will use is the ultraproduct.  A more
thorough introduction to ultrafilters and ultraproducts is given in
\cite{MR0568318}.

\begin{defi}
Let S be a set and let $\Sigma$ be a collection of non-empty subsets
of S. Then $\Sigma$ is called a \textit{non-principal filter} if the following hold:
\begin{enumerate}
\item[(1)] $S_{1}, S_{2} \in \Sigma$ implies $S_{1} \cap S_{2} \in \Sigma$
\item[(2)] $S_{1} \in \Sigma$ and $S_{2} \supset S_{1}$ implies $S_{2} \in \Sigma$ 
\item[(3)] For each $s \in S$ there is a set $S_{1} \in \Sigma$ such that $s \notin S_{1}.$

\end{enumerate}
\noindent $\Sigma$ is called a  \textit{non-principal ultrafilter}  if it is maximal among the class of all non-principal filters 
on $S$, or equivalently:
\begin{enumerate}
\item[(4)] $S_{1} \notin \Sigma$ implies $S - S_{1} \in \Sigma$.
\end{enumerate}
\end{defi}
Conditions $1,2,$ and $4$ define an \textit{ultrafilter}.

A simple, but important property of ultrafilters to keep in mind is
that if $S$ is the disjoint union of subsets $S_{1},\ldots,S_{n}$,
then precisely one of these subsets is in $\Sigma$.  This
observation follows from properties 1 and 4. Namely, at least one of
the $S_{i}$ is in $\Sigma$ by property 4. Moreover, two disjoint
subsets cannot both be in $\Sigma$ since then so would their
intersection, but $\Sigma$ consists only of nonempty subsets of $S$.

Given any subset $S_{0} \subset S$ it will be useful to know if we
can find a  non-principal ultrafilter on S containing $S_{0}$. Certainly, if
$S_{0}$ is a finite set, then properties 3 and 4 of the definition above  will prevent
us from finding a non-principal ultrafilter containing $S_{0}$. However, this is the
only obstruction as the  lemma below asserts.

\begin{lemm}\label{lem_ultra}
Given any infinite subset $S_{0} \subset S$, there exists a non-principal
ultrafilter containing $S_{0}$.
\end{lemm}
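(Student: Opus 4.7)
The plan is to construct the desired non-principal ultrafilter by first producing a non-principal filter that contains $S_0$, and then enlarging it to an ultrafilter via Zorn's lemma. Throughout, the trick is to work with the \emph{finite intersection property}: a collection $\mathcal{B}$ of subsets of $S$ has the finite intersection property if every intersection of finitely many members of $\mathcal{B}$ is nonempty, and any such $\mathcal{B}$ generates a filter by closing under finite intersections and supersets.

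First, I would introduce the collection
\[
\mathcal{B} \;=\; \{S_0\} \;\cup\; \{\, S \setminus \{s\} : s \in S \,\}.
\]
Because $S_0$ is infinite, for any finitely many points $s_1,\ldots,s_n \in S$ the intersection
\[
S_0 \cap (S \setminus \{s_1\}) \cap \cdots \cap (S \setminus \{s_n\}) \;=\; S_0 \setminus \{s_1,\ldots,s_n\}
\]
is still infinite, hence nonempty. Intersections involving only the cofinite members are also nonempty for the same reason. Thus $\mathcal{B}$ has the finite intersection property, and the collection $\mathcal{F}$ of all subsets of $S$ that contain some finite intersection of members of $\mathcal{B}$ is a filter on $S$ satisfying conditions $(1)$ and $(2)$ of the definition. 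It satisfies $(3)$ as well, since for every $s \in S$ the set $S \setminus \{s\}$ belongs to $\mathcal{F}$. So $\mathcal{F}$ is a non-principal filter containing $S_0$.

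Next I would apply Zorn's lemma to the poset $\mathcal{P}$ of all non-principal filters on $S$ that contain $\mathcal{F}$, ordered by inclusion. Given a chain $\{\mathcal{F}_\alpha\}$ in $\mathcal{P}$, one checks routinely that the union $\bigcup_\alpha \mathcal{F}_\alpha$ is again a filter (closure under finite intersections uses that the $\mathcal{F}_\alpha$ are linearly ordered), contains only nonempty sets, and contains all cofinite complements $S \setminus \{s\}$, so it lies in $\mathcal{P}$ and is an upper bound. Zorn's lemma then produces a maximal element $\Sigma \in \mathcal{P}$.

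Finally, I would verify that $\Sigma$ is an ultrafilter, i.e.\ satisfies property $(4)$. Suppose for contradiction that some $S_1 \subset S$ satisfies $S_1 \notin \Sigma$ and $S \setminus S_1 \notin \Sigma$. Then one of the two collections
\[
\Sigma \cup \{S_1\}, \qquad \Sigma \cup \{S \setminus S_1\}
\]
must still have the finite intersection property: otherwise there would exist $T, T' \in \Sigma$ with $T \cap S_1 = \emptyset$ and $T' \cap (S \setminus S_1) = \emptyset$, forcing $T \cap T' = \emptyset$ and contradicting the fact that $\Sigma$ is a filter of nonempty sets. Closing the resulting collection under supersets and finite intersections gives a strictly larger non-principal filter containing $\mathcal{F}$, contradicting maximality of $\Sigma$. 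Hence $\Sigma$ is a non-principal ultrafilter containing $S_0$, as required. The main (only) subtlety is the Zorn's lemma bookkeeping and the final maximality argument; everything else is a direct verification of the axioms.
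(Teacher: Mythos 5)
Your proof is correct and takes essentially the same route as the paper's: the filter you generate from $\mathcal{B}=\{S_0\}\cup\{S\setminus\{s\}:s\in S\}$ is exactly the paper's filter of all sets containing all but finitely many points of $S_0$, and both arguments then pass to a maximal (ultra)filter containing it. You simply spell out the Zorn's lemma bookkeeping and the maximality-implies-ultrafilter step that the paper leaves to the reader as ``easy to check.''
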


\begin{proof}
Let $\Sigma$ consist of all the subsets of $S$ that contain all but a finite number of points in $S_0$.  It is easy to check that  $\Sigma$ is a non-principal filter on $S$ containing $S_0$.  The non-principal ultrafilter desired is any maximal filter containing $\Sigma$.
\end{proof}

Our usage of ultrafilters will be for an auxiliary construction
called the ultraproduct. In particular, given a collection of fields
indexed by a set $S$, and an ultrafilter $\Sigma$ on $S$, we will
construct a new field via the ultraproduct. We will use a similar
construction for modules.
\begin{defi}
Given an ultrafilter $\Sigma$ on $S$ and a collection of rings
$\{R_{i}\}_{i \in S}$ we can form a new ring denoted
\[
\prod_{i \in S}R_{i}/\Sigma
\]
defined by componentwise addition and multiplication under the
equivalence condition that $a,b \in \prod_{i \in S}R_{i}$ are
equivalent if they agree on a set of indices in $\Sigma$. This new
ring is called the \textit{ultraproduct} of the $R_{i}$'s with respect to
$\Sigma$.  The same definition can be made for groups, modules, etc.  
\end{defi}
One of the nice aspects of ultraproducts is that the ultraproduct of fields is also a field. Moreover, statements in the language of fields can be transferred between the ultraproduct and its components which leads to the fundamental property of ultraproducts.  Let $\{k_p\}_{p \in S}$ be a collection of fields. Then {\L}o\v{s}'s Theorem (\cite{FieldArith}  7.7.1) applied to the particular cases of an  ultraproduct of fields can be stated as: 
\begin{Theo}({\L}o\v{s})\label{Los} A first-order formula in the language of rings is true in the ultraproduct of fields $\prod_{p \in S}k_p/\Sigma$ if and only if the set of indices $p$ such that the formula is true in the field $k_p$ is a member of $\Sigma$.
\end{Theo}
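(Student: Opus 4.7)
The strategy is structural induction on the complexity of the formula $\varphi$. Set $K := \prod_{p \in S} k_p/\Sigma$ and write $[a_p]$ for the class of a sequence $(a_p)_{p \in S}$. The precise inductive statement to prove, for every formula $\varphi(x_1,\ldots,x_n)$ in the language of rings and every tuple $[a_p^{(1)}],\ldots,[a_p^{(n)}]$ in $K$, is
\[
K \models \varphi([a_p^{(1)}],\ldots,[a_p^{(n)}]) \iff \{\,p \in S : k_p \models \varphi(a_p^{(1)},\ldots,a_p^{(n)})\,\} \in \Sigma.
\]
A routine check using the defining equivalence relation of the ultraproduct shows the right-hand side is independent of the chosen representatives $a_p^{(i)}$.

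For the base case, an atomic formula is an equation $t_1(\bar x)=t_2(\bar x)$ between integer-coefficient polynomial terms. Since the ring operations and the constants $0,1$ in $K$ are defined componentwise, each $t_i$ evaluated at $([a_p^{(1)}],\ldots)$ equals $[t_i(a_p^{(1)},\ldots)]$, so the equation holds in $K$ iff the two classes agree on a set belonging to $\Sigma$, which is exactly the right-hand side. For the inductive step it suffices to handle $\wedge$, $\neg$, and $\exists$, since $\vee$, $\to$, and $\forall$ reduce to these. For $\varphi = \psi_1 \wedge \psi_2$, the two defining sets $A_1, A_2$ (which by induction control $\psi_1,\psi_2$) both lie in $\Sigma$ iff $A_1 \cap A_2 \in \Sigma$, by filter axioms (1) and (2); this intersection is the set for $\varphi$. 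For $\varphi = \neg \psi$ with associated set $A$, $\psi$ fails in $K$ iff $A \notin \Sigma$, iff $S \setminus A \in \Sigma$ by the defining ultrafilter axiom (4); and $S \setminus A$ is the set for $\neg \psi$. These are the places where each defining property of a non-principal ultrafilter gets invoked exactly once.

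The only step requiring real work is the existential quantifier. If $\varphi(\bar a) = \exists x\, \psi(x,\bar a)$ holds in $K$ with witness $[b_p]$, then by induction $\{p : k_p \models \psi(b_p, a_p^{(1)},\ldots)\} \in \Sigma$, and this is contained in $\{p : k_p \models \exists x\, \psi(x, a_p^{(1)},\ldots)\}$, which is therefore in $\Sigma$ by filter axiom (2). Conversely, suppose $T := \{p : k_p \models \exists x\, \psi(x, a_p^{(1)},\ldots)\} \in \Sigma$. Using the axiom of choice, select for each $p \in T$ a witness $b_p \in k_p$ with $k_p \models \psi(b_p, a_p^{(1)},\ldots)$, and set $b_p = 0$ for $p \notin T$; then the inductive hypothesis applied to $\psi$ yields $K \models \psi([b_p], \bar a)$, giving a witness in $K$. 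The principal obstacle is this existential step: one must appeal to the axiom of choice to assemble a single element of $K$ out of componentwise witnesses, which is what makes \L{}o\v{s}'s theorem fundamentally nonconstructive. A preliminary observation that $K$ is actually a field (so that $\models$ makes sense in the intended way) follows from the same ultrafilter dichotomy applied to the formula $x = 0 \vee \exists y\, xy = 1$, together with componentwise verification of the other field axioms.
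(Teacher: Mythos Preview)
Your proof is correct and is the standard argument for \L{}o\v{s}'s theorem by structural induction on formula complexity. The paper, however, does not actually prove this result: it merely states it with a citation to Fried--Jarden's \emph{Field Arithmetic} and then offers Lemma~\ref{lem_freemodule} (on ultraproducts of bounded-rank free modules) as an illustrative special case to build intuition for why boundedness matters. So there is nothing substantive to compare beyond noting that you have supplied a complete proof where the paper gives only a reference.

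One small correction: you assert that ``each defining property of a non-principal ultrafilter gets invoked exactly once,'' but in fact property~(3) of Definition~5 (non-principality) is never used in your argument, nor should it be---\L{}o\v{s}'s theorem holds for arbitrary ultrafilters, principal or not. Only the filter axioms (1) and (2) and the maximality axiom (4) enter. The non-principality is needed elsewhere in the paper (to force the residue characteristic of the ultraproduct to be zero), not here.
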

Intuitively, a first-order formula is a
formula that only quantifies  over elements of the field, not over subsets, sets of subsets, etc.
To get a feeling for why {\L}o\v{s}'s theorem is true, even for structures more general than fields, consider the following lemma.

\begin{lemm}\label{lem_freemodule}  Let $N$ be a positive integer.
For each $i \in S$, let $M_{i}$ be a free module of rank less than
$N$ over a ring $R_{i}$. Then an ultraproduct of the $M_{i}$'s is a
free module of rank less than $N$ over the corresponding
ultraproduct of the $R_{i}$'s.
\end{lemm}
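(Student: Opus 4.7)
My plan is to exploit the pigeonhole-style observation for ultrafilters noted just before Lemma \ref{lem_ultra}, which forces a uniform rank on a large set of indices, and then transport bases through the ultraproduct.

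First I would partition the index set. For each $k \in \{0,1,\dots,N-1\}$, let $S_k = \{i \in S : \mathrm{rank}_{R_i}(M_i) = k\}$. Since $\mathrm{rank}(M_i) < N$ for every $i$, the sets $S_0,\dots,S_{N-1}$ form a finite disjoint partition of $S$. By the observation made after Definition 4 (properties 1 and 4 of an ultrafilter), exactly one of these sets, say $S_{k_0}$, lies in $\Sigma$. This integer $k_0$ is the candidate rank of the ultraproduct module $M := \prod_{i \in S} M_i / \Sigma$ over the ultraproduct ring $R := \prod_{i \in S} R_i / \Sigma$.

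Next I would produce the candidate basis. For each $i \in S_{k_0}$ choose an $R_i$-basis $e_1^{(i)},\dots,e_{k_0}^{(i)}$ of $M_i$, and for $i \notin S_{k_0}$ set the $e_j^{(i)}$ to be zero (these indices do not matter, since $S_{k_0} \in \Sigma$). Write $\bar e_j \in M$ for the class of $(e_j^{(i)})_{i \in S}$ modulo $\Sigma$. I claim $\bar e_1,\dots,\bar e_{k_0}$ is an $R$-basis of $M$. For linear independence: suppose $\sum_j \bar r_j \bar e_j = 0$ in $M$, represented by tuples $(r_j^{(i)})_i$; then the set $T := \{i : \sum_j r_j^{(i)} e_j^{(i)} = 0 \text{ in } M_i\}$ belongs to $\Sigma$, so $T \cap S_{k_0} \in \Sigma$, and on this set freeness of $M_i$ gives $r_j^{(i)} = 0$ for every $j$, hence $\bar r_j = 0$. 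For spanning: given any $\bar m \in M$ represented by $(m^{(i)})_i$, for each $i \in S_{k_0}$ expand uniquely as $m^{(i)} = \sum_j r_j^{(i)} e_j^{(i)}$, and (arbitrarily) set $r_j^{(i)} = 0$ off $S_{k_0}$; the equation $\bar m = \sum_j \bar r_j \bar e_j$ then holds on the set $S_{k_0} \in \Sigma$, hence in $M$.

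There is essentially no hard step here: the only substantive move is the finite-partition argument that selects a single rank $k_0 < N$ out of the ultrafilter, after which everything reduces to the fact that two tuples represent the same element of the ultraproduct iff they agree on a set in $\Sigma$. The main thing to be careful about is that the argument genuinely uses the uniform bound $N$; without it one cannot force a single rank into $\Sigma$, and indeed the conclusion would fail (an ultraproduct of free modules of unbounded finite rank need not be free). This lemma can also be viewed as the special case of {\L}o\v{s}'s Theorem \ref{Los} applied to the first-order statement ``there exist $k_0$ elements forming a basis,'' but the direct argument above is cleaner and makes the role of the uniform bound $N$ transparent.
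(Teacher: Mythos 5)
Your proof is correct and follows essentially the same approach as the paper: partition $S$ by rank, use the ultrafilter pigeonhole to select a single rank $k_0$ on a set in $\Sigma$, and transport componentwise bases through the ultraproduct. You simply spell out the linear-independence and spanning checks that the paper leaves implicit, and you handle the rank-$0$ case more carefully than the paper's indexing $S_1,\dots,S_N$ does.
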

\begin{proof}
First, assume that the rank of the $M_{i}$'s are all $m > 0$. Then
for any ultrafilter $\Sigma$ on $S$
\[
M := \prod_{i \in S} M_{i} /\Sigma
\]
will be a free module of rank $m$ over
\[
R := \prod_{i \in S} R_{i} /\Sigma.
\]
To see this, let $e_{i1},\ldots,e_{im}$ be an $R_{i}$ basis for
$M_{i}$. Then note that $M$ has basis $(e_{i1})_{i \in
S},\ldots,(e_{im})_{i \in S}$ over $R$.

Now generally, consider the subsets $S_{k} \in S$ consisting of
those $i \in S$ such that the rank of $M_{i}$ is $k$. Then S is the
disjoint union of $S_{1},\ldots,S_{N}$. By the remarks on the
definition of ultrafilter, there is only one such subset contained
in $\Sigma$, say $S_{m} \in \Sigma$. It follows that $M$ has rank
$m$ over $R$.
\end{proof}
What makes this lemma work is the boundedness of the statement (that the rank is less than $N$). The similar statement that the ultraproduct of finite rank free modules is finite rank is actually false (say if the rank of the free modules keeps increasing). {\L}o\v{s}'s theorem does not apply to such a statement because it is not a first-order statement.

Next, we  develop some basic algebraic geometry over a general ultraproduct of
fields $F = \prod_{i \in S}F_{i}/\Sigma$ of characteristic zero.  
 Suppose we are given a
scheme $X$ of finite type over $F$. There is a natural process to
obtain schemes $X_{i}$ of finite type over $F_{i}$, and for almost
every $i \in S$, $X_{i}$ is nicely related to $X$. However, the
$X_{i}$ are not unique.

Let's assume first that $X$ is an affine scheme corresponding to the
$F$-algebra
\[
F[x_{0},\ldots,x_{n}]/I(X).
\]
Suppose that $f_1,\ldots, f_k$ are generators for $I(X)$. We may
write each generator as
\[
f_{j} = \sum a_{j,I}x^{I}, \hspace{2mm}  I \in \N^{n+1}.
\]
Let $(a_{j,I}^{i})_{i\in S} \in \prod_{i \in S} F_{i}$ be a
representative for $a_{j,I}$. Setting
\[
f_{j}^{i} = \sum a_{j,I}^{i} x^{I},
\]
we define $X_{i}$ as the affine scheme associated to the ideal
generated by the $f_{j}^{i}$. These schemes are not unique as they
depend on the choice of representatives for the $a_{j,I}$.

We can perform a similar construction in reverse. Namely given schemes $X_i$ defined over $F_i$ for each $i\in S$ , we can defined their ultraproduct $X=\prod_{i \in S} X_{i} /\Sigma$ by taking the $f_j^i$ and lifting them to $f \in F[x_0, \ldots, x_n]$.  This new object is not necessarily pretty, for example if the degrees of the $f^i_j$ are not bounded.  However, we will see that under certain circumstances the ultraproduct is a scheme of finite type and other nice properties of the $X_i's$ will be inherited by $X$.

Let $X \subseteq \P^n$  be a projective variety defined over a field $F$ with homogeneous ideal $J(X)$, and let $S(X)= F[x_0, \ldots, x_n] / J(X)$  denote its homogeneous
coordinate ring. For each integer $\ell$, we define the Hilbert function $\varphi_X$ of $X$ by,
$$\varphi_X(\ell) = \dim_F S(X)_\ell.$$
It is a theorem of Hilbert and Sere (\cite{Hart} I.7.5) that there exists a unique numerical polynomial $P(z) \in \mathbb{Q}[z]$ such that $\varphi_X(\ell)= P(\ell)$ for all $l \gg 0$.  By definition, the degree of the Hilbert polynomial is the dimension of the variety $X$.
Chardin and Moreno-Soc{\'{\i}}as characterize, in terms of their coefficients, which numerical polynomials are
Hilbert polynomials of some projective scheme \cite{MR1948099}.
\begin{lemm}\label{lemma_hilb}
Given a collection of
projective varieties $X_{i}\subset \P_{F_{i}}^n$ all with Hilbert polynomial $P$,   the ultraproduct $X$ is a projective variety in $\mathbb{P}_{F}^{n}$ with Hilbert
polynomial $P$.
\end{lemm}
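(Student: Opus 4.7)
The plan is to use the fact that fixing the Hilbert polynomial $P$ bounds the complexity of the defining ideals $J(X_{i})$, so that the ultraproduct can be computed graded piece by graded piece, in finitely many bounded-dimensional vector spaces. First, I invoke a uniform regularity bound, e.g.\ Gotzmann's regularity theorem, to obtain an integer $N = N(P,n)$ with the property that for every closed subscheme $Y \subseteq \P^{n}$ with Hilbert polynomial $P$ (over any field), the homogeneous ideal $J(Y)$ is generated in degrees $\leq N$ and $\varphi_{Y}(d) = P(d)$ for all $d \geq N$. In particular, each $J(X_{i})$ satisfies this.

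For each $d \leq N$, the graded piece $F_{i}[x_{0},\ldots,x_{n}]_{d}$ is a free $F_{i}$-module of rank $M_{d} := \binom{n+d}{d}$, with monomial basis independent of $i$, and $\dim_{F_{i}} J(X_{i})_{d}$ lies in the finite set $\{0, 1, \ldots, M_{d}\}$. By the disjoint-union property of non-principal ultrafilters (properties $(1)$ and $(4)$), exactly one value $M_{d} - \psi(d)$ of $\dim_{F_{i}} J(X_{i})_{d}$ is attained on an index set in $\Sigma$. Intersecting the finitely many such sets as $d$ ranges over $\{0, 1, \ldots, N\}$ yields a single $S^{*} \in \Sigma$ on which $\dim_{F_{i}} J(X_{i})_{d} = M_{d} - \psi(d)$ for every $d \leq N$.

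Next, define $J_{d} := \prod_{i \in S^{*}} J(X_{i})_{d}/\Sigma$ for each $d \leq N$. By Lemma \ref{lem_freemodule}, each $J_{d}$ is a free $F$-module of rank $M_{d} - \psi(d)$, and it embeds into the ultraproduct $\prod_{i} F_{i}[x_{0},\ldots,x_{n}]_{d}/\Sigma$, which is canonically identified with $F[x_{0},\ldots,x_{n}]_{d}$ via the common monomial basis. Let $J(X) \subseteq F[x_{0},\ldots,x_{n}]$ be the graded ideal generated by $\bigoplus_{d \leq N} J_{d}$, and let $X \subseteq \P^{n}_{F}$ be the associated closed subscheme. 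The assertions that $J_{d}$ is stable under multiplication by each variable $x_{k}$ (so the pieces assemble into an ideal) and that each $\dim_{F}\bigl(F[x_{0},\ldots,x_{n}]/J(X)\bigr)_{d}$ takes a prescribed value reduce, in the bounded-rank monomial framework, to the solvability of explicit linear systems, hence are first-order statements in the language of rings. Applying {\L}o\v{s}'s theorem (Theorem \ref{Los}) transfers these from the $X_{i}$ to $X$, yielding $\varphi_{X}(d) = \psi(d)$ for $d \leq N$ and $\varphi_{X}(d) = P(d)$ for $d \geq N$. Since the Hilbert polynomial is determined by its values in any cofinite set of degrees, $X$ has Hilbert polynomial $P$.

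The main obstacle is establishing the uniform regularity bound $N(P,n)$: without it, the degrees of generators of the $J(X_{i})$ could be unbounded in $i$, so that their ultraproduct would fail to be finitely generated over $F$ and would fall outside the reach of Lemma \ref{lem_freemodule} and of {\L}o\v{s}'s theorem (whose first-order transfer only applies to bounded quantifications). Once such an $N$ is fixed, the argument is a graded-piece-wise application of Lemma \ref{lem_freemodule}, combined with finitely many uses of the ultrafilter dichotomy on bounded integer invariants.
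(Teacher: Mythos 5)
Your proof is correct and follows the same core strategy as the paper: exploit a uniform bound, depending only on $P$ and $n$, past which the Hilbert function of every subscheme with Hilbert polynomial $P$ agrees with $P$ (you name Gotzmann's regularity theorem; the paper cites Koll\'ar's book I.1.5 for the same uniformity), and then apply Lemma~\ref{lem_freemodule} degree by degree to the graded pieces $J(X_i)_d$ to build the limiting ideal. Your write-up is more explicit than the paper's --- you pin down a single $\Sigma$-large index set on which the low-degree dimensions agree, generate the ideal from the pieces below the regularity bound, and use \L o\v{s}'s theorem to verify that the pieces assemble into an ideal --- but these are elaborations of the same argument rather than a different route.
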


\begin{proof}
Let $J_{i} \subset F_{i}[x_{0},\ldots,x_{n}]$ be the homogeneous
ideal of $X_{i} \subset \mathbb{P}_{F_{i}}^{n}$. For each degree $d
> 0$ consider the $F_{i}$-vector space $J_{i,d}$ of the homogeneous
polynomials of degree $d$ in $J_{i}$. 

 Now define
\[
J_{d} := \prod_{i \in S} J_{i,d} /\Sigma .
\]
It is a property of the Hilbert polynomial that for sufficiently large $d$ the rank of $J_{i,d}$ is the same for each $i\in S$, i.e. the Hilbert functions of the $X_i$ are equal for sufficiently large $d$ (\cite{MR1440180} I.1.5).  Then, the proof of Lemma \ref{lem_freemodule} shows that for $d\gg0$ the rank of $J_d$ equals the rank of $J_{i,d}$.
This yields a homogeneous ideal
\[
J := \bigoplus_{d > 0} J_{d} \subset F[x_0, \ldots , x_n].
\]
The corresponding projective variety $X$ denoted by
\[
X := \prod_{i \in S} X_{i} /\Sigma
\]
has Hilbert Polynomial $P$.
\end{proof}

Other nice results on properties of varieties preserved under the ultraproduct can be found in papers of Arapura \cite{Ar} and Schoutens \cite{Schoutens}.

\section{Proof of the Main Theorem}

Now that we have established some basic knowledge of ultraproducts we can prove the main theorem of this paper, Theorem \ref{thm_1}, in a way similar to Ax and Kochen's proof of Theorem \ref{thm_AK}.  First we need to recall a theorem of Ax-Kochen and Er\u{s}ov \cite{MR0184930,MR0193086}.

\begin{Theo}\label{prop_ultra}(Ax-Kochen and Er\v{s}ov)  Let $K$ and $K'$
be two Henselian valued fields of residual characteristic zero. Assume their residue fields $k$ and $k'$
and their
value groups $\Gamma$ and $\Gamma'$
are elementary equivalent, that is, they have the same set of true sentences in the language of rings, respectively ordered abelian groups. Then $K$ and
$K'$
are elementary equivalent, that is, they satisfy the same set of formulas in the language of
valued fields.
\end{Theo}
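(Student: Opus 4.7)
The plan is to establish Theorem \ref{prop_ultra} by passing to sufficiently saturated models and constructing an isomorphism of valued fields via a transfinite back-and-forth that splices together the given isomorphisms of residue fields and value groups.

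First, I would replace $K$ and $K'$ by elementary extensions $K^*$ and ${K'}^*$ that are both $\kappa$-saturated and of cardinality $\kappa$, for some cardinal $\kappa > |K|+|K'|$ (working inside a monster model, or invoking Keisler--Shelah and ultrapowers to sidestep set-theoretic niceties). Being Henselian is first-order in the language of valued fields, so both extensions are still Henselian, and their residue fields $k^*, {k'}^*$ and value groups $\Gamma^*, {\Gamma'}^*$ become $\kappa$-saturated of the same cardinality while remaining elementarily equivalent in their respective languages. By uniqueness of saturated models of a complete theory in a fixed cardinality, fix isomorphisms $\bar\sigma \colon k^* \to {k'}^*$ and $\tau \colon \Gamma^* \to {\Gamma'}^*$.

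Next, I would build an isomorphism of valued fields $\sigma \colon K^* \to {K'}^*$ inducing $\bar\sigma$ and $\tau$ by a back-and-forth argument. Since residue characteristic zero forces the ambient fields themselves to have characteristic zero, we begin with the common prime subfield $\bQ$ equipped with its trivial valuation. At each stage we have a partial valued-field isomorphism $\sigma_\alpha \colon E_\alpha \to E'_\alpha$ between small valued subfields, compatible with $\bar\sigma$ and $\tau$, and we extend it to include any prescribed element $a \in K^*$ (or, symmetrically, an element of ${K'}^*$) via one of three moves: (i) the residue $\bar a$ or the value $v(a)$ escapes the residue field or value group of $E_\alpha$, handled using $\bar\sigma$ or $\tau$ together with a transcendence-degree-one extension; (ii) $a$ is algebraic over $E_\alpha$, in which case the image of its minimal polynomial over $E'_\alpha$ has a matching root in ${K'}^*$ by Hensel's lemma; (iii) $E_\alpha(a)/E_\alpha$ is immediate, in which case $a$ is a pseudo-limit of a pseudo-Cauchy sequence drawn from $E_\alpha$, and we invoke Kaplansky's theory of pseudo-convergence together with $\kappa$-saturation to locate a matching pseudo-limit in ${K'}^*$.

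The principal obstacle is case (iii), and here the residue-characteristic-zero hypothesis is decisive: by Kaplansky's theorem on maximal immediate extensions there are no nontrivial algebraic immediate extensions in this setting, so only transcendental pseudo-Cauchy sequences require matching, and the relevant equivalence class of such a sequence is captured by first-order data in $\Gamma^*$ (its breadth together with the sequence of achieved values), which is realised in ${K'}^*$ by $\kappa$-saturation and the isomorphism $\tau$. Once the back-and-forth exhausts both sides we obtain an isomorphism $K^* \cong {K'}^*$ of valued fields, and therefore $K \equiv K'$ in the language of valued fields, which is the desired conclusion.
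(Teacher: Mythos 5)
The paper does not prove Theorem~\ref{prop_ultra}: it is quoted as a known result of Ax--Kochen and Er\v{s}ov with citations, and the authors move straight on to apply it. So there is no in-paper argument to compare against; what you have written is essentially the standard saturation-plus-back-and-forth proof of the equal-characteristic-zero AKE principle (the one found in, e.g., van den Dries's notes on the model theory of valued fields). The skeleton is sound: pass to $\kappa$-saturated elementary extensions of equal cardinality, use uniqueness of saturated models of a complete theory to fix isomorphisms $\bar\sigma$ and $\tau$ of residue fields and value groups, and then build a valued-field isomorphism by a back-and-forth, extending one element at a time via (i) value-group or residue jumps, (ii) algebraic steps handled by Hensel's lemma, and (iii) immediate transcendental steps handled by Kaplansky's theory and saturation.

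One imprecision worth flagging is in your handling of case (iii). You say the equivalence class of a transcendental pseudo-Cauchy sequence is ``captured by first-order data in $\Gamma^*$ (its breadth together with the sequence of achieved values), which is realised in ${K'}^*$ by $\kappa$-saturation and the isomorphism $\tau$.'' A pc-sequence is a sequence in the field $E_\alpha$, not in the value group, and its breadth and value data alone do not determine the extension $E_\alpha(a)/E_\alpha$. The correct move is to transport the pc-sequence itself through the current partial isomorphism $\sigma_\alpha$ to a pc-sequence in $E'_\alpha$, use $\kappa$-saturation of ${K'}^*$ to realize a pseudo-limit $a'$ of the transported sequence, and then invoke Kaplansky's theorem on pc-sequences of transcendental type to conclude that $a \mapsto a'$ extends $\sigma_\alpha$ to a valued-field isomorphism $E_\alpha(a) \to E'_\alpha(a')$. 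Relatedly, the fact that a Henselian valued field of residue characteristic zero admits no proper immediate \emph{algebraic} extension is really a consequence of Ostrowski's lemma (absence of defect in equal characteristic zero) rather than of Kaplansky's uniqueness theorem for maximal immediate extensions, though the two live in the same circle of ideas. With these corrections the sketch is a faithful rendering of the standard proof.
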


\begin{proof}Proof of Theorem \ref{thm_1}.

Fix a numerical polynomial $P$.  Let $\bQ_p^{nr}$ denote the maximally unramified extension of the $p$-adics and let $S$ be the set of all primes.
Suppose by way of contradiction that there is an infinite subset of
primes $e(P) \subset S$ such that for each $p \in e(P)$ there is
a smooth, projective, rationally connected variety $X_{p} \in \mathbb{P}^{n}$ defined over $\bQ_p^{nr}$ having Hilbert
polynomial $P$  and $X(\bQ_p^{nr})=\emptyset$. Now by Lemma
\ref{lem_ultra} there is a non-principal ultrafilter $\Sigma$ containing $e(P)$, and
we can define the non-principal ultraproduct
\[
K = \prod_{p \in S}\bQ_p^{nr}/\Sigma.
\]
Both $K$ and 
$\C((t))$ are  Henselian valued fields of characteristic zero.   The  residue field of $\C((t))$ is simply $\C$, so algebraically closed of characteristic zero.  The  residue field of $K$ is a non-principal ultraproduct of the algebraic closures of the finite fields $\F$ and is  known to be algebraically closed of  characteristic zero (\cite{MR0184930}, Lemma 4).  Thus, the residue fields of $K$ and $\C((t))$ are elementary equivalent.  
  Note that the lemma of Ax and Kochen requires the ultrafilter to be non-principal which is why we need $e(P)$ to be infinite.   $K$ has value group a non-principal
ultraproduct of the integers, otherwise known as an  ultrapower of $\Z$, hence is elementary equivalent to $\Z$, the value group of $\C((t))$. 
Thus by Theorem \ref{prop_ultra}, $K$ and $\C((t))$ are elementary equivalent.

For the numerical polynomial $P$ fixed above, consider all varieties $X \subset \P^n_k$ with Hilbert polynomial $P$.  It is possible to choose a uniform $M\gg0$ such that the Hilbert function $\varphi_X(M)$ is equal to $P(M)$ for all varieties with Hilbert polynomial $P$ (\cite{Bren} 12.47).  Let $Gr$ denote the Grassmannian of codimension-$P(M)$ subspaces of the space of polynomials of degree $M$ in $n+1$ variables:
$$Gr:= Grass\left( {n+M \choose M} - P(M), k[x_0, \ldots , x_n]_M\right).$$  Let $J(X)_M \subset k[x_0, \ldots , x_n]_M$ denote the degree $M$ polynomials vanishing on $X$.
Then, $J(X)_M$ defines a point in the Grassmannian $Gr$ and the set of all projective varieties with Hilbert polynomial $P$ is parameterized by a projective variety known as the Hilbert scheme,  $\mathcal{H}ilb_P \subset Gr$.  Let $g$ denote the smooth morphism that injects $\mathcal{H}ilb_P$ into $Gr$, $g: \mathcal{H}ilb_P \hookrightarrow Gr$.  Notice that since $P$ is a numerical polynomial, everything here is defined over $\mathbb{Q}$.  Over any field $L$ of characteristic 0, every projective $L$-variety $X$ with Hilbert polynomial $P$ is realized as a fiber $g_{b}$ of $g$ above a point $b=J(X)_M \in Gr$.  Thus, $X$ has an $L$-point if and only if $g_{b}$ has an $L$-point.

If the fiber $g_{b}$ is  rationally connected for some $b\in Gr$, then there is an open neighborhood $b\in U\subset Gr$  such that the fiber $g_{u}$ is  rationally connected  if $u\in U$ (\cite{MR1440180} IV.3.11).  Thus, over $\mathbb{Q}$, the set $Z$ of points $b \in Gr$ such that the fiber $g_{b}$ is  rationally connected is an open subset of $Gr$. Since $Gr$ is a variety and $Z$ is an open subset of $Gr$, over any field $L$ of characteristic zero $Z(L)$ is definable in the ring language.  Furthermore, that definition is actually the same one given over $\mathbb{Q}$.  Also, for any field $L$ of characteristic zero and any  rationally connected variety $X$ over $L$ with Hilbert polynomial $P$, there is a an $L$-point
$z\in Z(L)$ such that $X=g_{z}$ and $X(L)\neq \emptyset \iff g_{z}(L) \neq \emptyset$.  

Now let $H_L$ be the set of points  $b \in Gr(L)$  such that the fiber over $b$ is  rationally connected and contains an $L$-point:
$$H_L= \{ b \in Gr(L) : b \in Z(L), g_{b}(L) \neq \emptyset\}.$$  
Then $H_L \subset Z(L)$ is a definable subset of $Gr(L)$ defined by the following formula over $\mathbb{Q}$  with an existential quantifier: $b\in H_L$ if and only if the formula ``$b\in Z$ and $\exists  x\in \mathcal{H}ilb_P$ such that $g_P(x)=b$" is true in the field $L$.  This formula does not depend on the field $L$, but only on the polynomial $P$.
   
The result of Colliot-Th{\'e}l{\`e}ne  \cite{MR2757627} $\C((t))$ tells us that the formula ``$b\in Z$ and $\exists x\in A_P$ such that $g_P(x)=b$" is true over $\C((t))$.  Then by elementary equivalence it is true over the ultraproduct $K$.    
But {\L}o\v{s}'s Theorem tells us that this statement is false over $K$ by the way we constructed our non-principal ultrafilter $\Sigma$.   Thus we arrive at our contradiction, and we have shown that the set of primes $e(P) \subset S$ such that for each $p \in e(P)$ there is
a smooth, projective, rationally connected variety $X_{p} \in \mathbb{P}^{n}$ defined over $\bQ_p^{nr}$ having Hilbert
polynomial $P$  and $X(\bQ_p^{nr})=\emptyset$ is finite.

\end{proof}

\bibliographystyle{alpha}
\bibliography{BradAKBib}

\end{document}